\newtheorem{lemma}{Lemma}
\newtheorem{theorem}[lemma]{Theorem}
\newtheorem{claim}{Claim}[lemma]
\newtheorem*{definition}{Definition}
\newtheorem{conjecture}[lemma]{Conjecture}
\newcommand{\GF}[1]{\text{GF}(#1)}
\newcommand{\PGP}[3]{\text{PG}_{#3}(#1,#2)}
\newcommand{\PG}[2]{\text{PG}(#1,#2)}
\newcommand{\II}{\mathcal{I}}
\newcommand{\prob}[2][]{\mathbb{P}_{#1}\!\left(#2\right)}
\newcommand{\expect}[2][]{\mathbb{E}_{#1}\!\left[#2\right]}
\newcommand{\qbinom}[3]{\binom{#1}{#2}_{\!#3}}
\newcommand{\col}[1]{\textup{col}(#1)}
\newcommand{\term}[1]{\emph{#1}}
\author{Jorn van der Pol}
\address{University of Waterloo, Waterloo, ON, Canada}
\title{Random $\GF{q}$-representable matroids are not $(b,c)$-decomposable}
\begin{document}

\begin{abstract}
	We show that a random subset of the rank-$n$ projective geometry~$\PG{n-1}{q}$ is, with high probability, not $(b,c)$-decomposable: if $k$ is its colouring number, it does not admit a partition of its ground set into classes of size at most $ck$, every transversal of which is $b$-colourable. This generalises recent results by Abdolazimi, Karlin, Klein, and Oveis Gharan~\cite{AKKG} and by Leichter, Moseley, and Pruhs~\cite{LeichterMoseleyPruhs2022}, who showed that~$\PG{n-1}{2}$ is not $(1,c)$-decomposable, resp.\ not $(b,c)$-decomposable.
\end{abstract}

\maketitle

\section{Introduction}

A matroid $M=(E,\II)$, with ground set $E$ and independent sets $\II$ is \term{$k$-colourable} (also \term{$k$-coverable}) if its ground set can be partitioned into $k$ independent sets. The smallest such $k$ is called the \term{colouring number} of $M$, for which we write $\col{M}$. The colouring number of a matroid was studied by Edmonds~\cite{Edmonds1965}, who provided the following characterisation.
\begin{theorem}[Edmonds' Characterisation]
	$\col{M} = \max\limits_{X \subseteq E(M): r(X) > 0} \left\lceil\frac{|X|}{r(X)}\right\rceil$.
\end{theorem}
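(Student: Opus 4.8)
The plan is to prove the two inequalities separately, working with a loopless matroid $M$ (if $M$ has a loop it admits no $k$-colouring at all, while the right-hand side stays finite, so the identity is to be read for loopless $M$, or with both sides equal to $+\infty$). The inequality $\col{M} \ge \max_X \lceil |X|/r(X)\rceil$ is a counting argument: if $E = I_1 \cup \cdots \cup I_k$ with each $I_j$ independent and $k = \col{M}$, then for any $X$ with $r(X) > 0$ each intersection $X \cap I_j$ is independent, so $|X \cap I_j| \le r(X)$, and summing over $j$ gives $|X| \le k\, r(X)$; as $k$ is an integer this is the same as $k \ge \lceil |X|/r(X)\rceil$, and taking the maximum over $X$ finishes this direction.

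For the reverse inequality, set $k := \max_X \lceil |X|/r(X)\rceil$, so that $|X| \le k\, r(X)$ for every $X \subseteq E$ (trivially when $r(X) = 0$, since then $|X| = 0$ as $M$ is loopless); I must build a partition of $E$ into $k$ independent sets. Here I would invoke the matroid union theorem of Nash--Williams and Edmonds: the union $N := M \vee \cdots \vee M$ of $k$ copies of $M$ is a matroid on $E$ whose independent sets are exactly the unions $I_1 \cup \cdots \cup I_k$ of $k$ independent sets of $M$, and whose rank function is
$$r_N(Y) = \min_{Z \subseteq Y}\bigl(|Y \setminus Z| + k\, r(Z)\bigr).$$
Then $M$ is $k$-colourable if and only if $E$ is independent in $N$, i.e.\ $r_N(E) = |E|$, and
$$r_N(E) = \min_{Z \subseteq E}\bigl(|E| - |Z| + k\, r(Z)\bigr) = |E| + \min_{Z \subseteq E}\bigl(k\, r(Z) - |Z|\bigr),$$
where the final minimum is at most $0$ (take $Z = \emptyset$) and, by the choice of $k$, at least $0$. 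Hence $r_N(E) = |E|$ and $\col{M} \le k$.

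All the content is thus concentrated in the matroid union theorem, which I expect to be the main obstacle for a self-contained treatment; failing a citation, one proves it by an augmenting-path argument. Take pairwise disjoint independent sets $I_1, \dots, I_k$ with $\bigl|\bigcup_j I_j\bigr|$ as large as possible; assuming some $e_0 \notin \bigcup_j I_j$, search a suitable exchange digraph on $E$ --- whose arcs record which single-element swaps preserve independence of the classes --- for a shortest augmenting path starting at $e_0$. Such a path lets one relocate elements to make room for $e_0$, contradicting maximality; and if none exists, the set of vertices reachable from $e_0$ is a subset $Z$ with $|Z| > k\, r(Z)$, contradicting the hypothesis --- minimality of the path being exactly what keeps the modified classes independent.
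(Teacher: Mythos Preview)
Your proof is correct and follows the standard route: the easy direction is pigeonhole, and the hard direction is an application of the matroid union theorem (equivalently, the Nash--Williams covering theorem), for which you also sketch the usual augmenting-path argument. There is nothing to compare against, however: the paper does not prove Edmonds' Characterisation at all --- it simply states the result and attributes it to Edmonds~\cite{Edmonds1965}, using it as a black box in the later arguments.
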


A $k$-colourable matroid $M$ is \term{$(1,c)$-decomposable} if its ground set can be partitioned into an arbitrary number of classes, each of which has cardinality at most $ck$, such that every transversal of the classes is independent. Equivalently, for such a matroid there exists a partition matroid $N$ on the same ground set, all of whose capacities are 1, such that every independent set in $N$ is independent in $M$ (in other words, the identity function is a \term{weak map} from $M$ to $N$). The notion of $(1,c)$-decomposition was introduced by B\'{e}rczi, Schwarcz, and Yamaguchi~\cite{BercziSchwarczYamaguchi2021}, who called it a $ck$-colourable partition reduction of $M$; the definition was subsequently extended to $(b,c)$-decomposability by Im, Moseley, and Pruhs~\cite{ImMoseleyPruhs2021}.
\begin{definition}
	A $k$-colourable matroid is \term{$(b,c)$-decomposable} if there is a partition $E = E_1 \cup E_2 \cup \ldots \cup E_{\ell}$, called a \term{$(b,c)$-decomposition}, such that
	\begin{enumerate}[(i)]
		\item $|E_i| \le ck$ for all $i \in [\ell]$, and
		\item every transversal $Y = \{y_1, y_2, \ldots, y_\ell\}$ with $y_i \!\in\! E_i$ for all $i \in [\ell]$ is \mbox{$b$-colourable}.
	\end{enumerate}
\end{definition}

B\'{e}rczi, Schwarcz, and Yamaguchi~\cite[Conjecture~1.10]{BercziSchwarczYamaguchi2021} conjectured that every matroid is $(1,2)$-decomposable. This was disproved by Abdolazimi, Karlin, Klein, and Oveis Gharan~\cite{AKKG}, who showed that, for sufficiently large~$n$, the rank-$n$ binary projective geometry $\PG{n-1,2}$ is not $(1,c)$-decomposable. Recently, Leichter, Moseley, and Pruhs~\cite{LeichterMoseleyPruhs2022} showed that the same matroid is not even $(b,c)$-decomposable, again provided that $n$ is sufficiently large.

\begin{theorem}[\cite{LeichterMoseleyPruhs2022}]\label{thm:LMP}
	For sufficiently large $n$, the rank-$n$ binary projective geometry $\PG{n-1}{2}$ admits no $(b,c)$-decomposition.
\end{theorem}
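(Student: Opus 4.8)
The plan is to derive a contradiction from the mere existence of a $(b,c)$-decomposition once $n$ is large: such a decomposition would force every flat of some fixed bounded rank to contain two ground-set elements lying in a common class, and there are too few such pairs to go around.

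First I would record, via Edmonds' Characterisation, that $k := \col{\PG{n-1}{2}} = \lceil (2^n-1)/n \rceil$; indeed any $X$ of positive rank $r$ satisfies $|X| \le 2^r-1$, and $r \mapsto (2^r-1)/r$ is increasing, so the maximum is attained at $X = E$, whence also $k \le (2^n-1)/n + 1$. Now suppose $E = E_1 \cup \dots \cup E_\ell$ is a $(b,c)$-decomposition, so $|E_i| \le ck$ for all $i$, and let $r_0 = r_0(b)$ be the least integer with $2^{r_0} - 1 > b r_0$; this depends on $b$ alone and $r_0 \ge 2$. The key step is the claim that every flat of rank $r_0$ meets at most $b r_0$ of the classes $E_i$: if some rank-$r_0$ flat $F$ met $b r_0 + 1$ of them, then choosing one point of $F$ from each of those classes and completing arbitrarily to a transversal $Y$ would produce a set $Z = Y \cap F$ containing those $b r_0 + 1$ points, so $|Z| > b r_0 \ge b \cdot r(Z)$; such a $Z$ is not $b$-colourable, hence neither is $Y$, contradicting property~(ii). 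Since a rank-$r_0$ flat has $2^{r_0} - 1 > b r_0$ points, the pigeonhole principle now yields, inside every rank-$r_0$ flat, a pair $\{u,v\}$ with $u, v \in E_i$ for some $i$ --- call these \emph{monochromatic pairs}.

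Next I would double count incidences between monochromatic pairs and flats of rank $r_0$. There are $\sum_i \binom{|E_i|}{2}$ monochromatic pairs, each contained in exactly $\qbinom{n-2}{r_0-2}{2}$ flats of rank $r_0$, and by the previous step each of the $\qbinom{n}{r_0}{2}$ such flats contains at least one, so
\[
	\qbinom{n}{r_0}{2} \;\le\; \qbinom{n-2}{r_0-2}{2} \sum_i \binom{|E_i|}{2}.
\]
A short computation collapses the quotient of Gaussian binomials to $(2^n-1)(2^{n-1}-1) / \big( (2^{r_0}-1)(2^{r_0-1}-1) \big)$, while convexity together with $|E_i| \le ck$ gives $\sum_i \binom{|E_i|}{2} \le \lceil (2^n-1)/(ck) \rceil \binom{ck}{2} \le \tfrac12 (2^n-1)\, ck \,(1 + o(1))$. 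Substituting $k \le (2^n-1)/n + 1$ and rearranging reduces the displayed inequality to $n \le c\,(2^{r_0}-1)(2^{r_0-1}-1)\,(1 + o(1))$, whose right-hand side depends only on $b$ and $c$. Hence no $(b,c)$-decomposition can exist once $n$ is large enough.

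I expect the substantive point to be the reduction to monochromatic pairs, and in particular pinning down the correct obstruction rank $r_0(b)$. Flats of rank below $r_0$ impose no constraint when $b \ge 2$ --- a line has only $3 \le 2b$ points, so its colour pattern is never an obstruction --- so one genuinely needs flats large enough that $2^r - 1$ outstrips $b r$; the point is that such an $r_0$ can be chosen bounded in terms of $b$ alone, which is what makes $(2^{r_0}-1)(2^{r_0-1}-1)$ a constant in the final estimate. By comparison, Edmonds' formula, the Gaussian-binomial identity, and the convexity bound are routine.
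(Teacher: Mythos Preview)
Your proof is correct and follows the same strategy the paper uses (and attributes to \cite{LeichterMoseleyPruhs2022}): every flat of large enough rank must contain two points from a common class $E_i$, and double-counting such monochromatic pairs against the number of flats yields a contradiction for large $n$. The only difference is tactical --- you fix the flat rank $r_0=r_0(b)$ and exploit the exact ratio $\qbinom{n}{r_0}{2}\big/\qbinom{n-2}{r_0-2}{2}$, whereas the paper's proof of Theorem~\ref{thm:main} (which recovers Theorem~\ref{thm:LMP} at $p=1$) lets the flat rank $d=\lceil\log\log n\rceil$ grow with $n$ and uses cruder estimates, a choice made with the random setting in mind.
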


With minor modifications, their proof can be generalised to projective geometries over arbitrary finite fields.

\begin{theorem}\label{thm:LMP-q}
	Let $q\ge 2$ be a prime power. For sufficiently large $n$, the rank-$n$ $q$-ary projective geometry $\PG{n-1}{q}$ admits no $(b,c)$-decomposition.
\end{theorem}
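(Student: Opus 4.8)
The plan is to follow the argument of Leichter, Moseley, and Pruhs, replacing dimension counts over $\GF{2}$ by their $q$-analogues. Write $[m]_q = (q^m-1)/(q-1)$ for the number of points of a rank-$m$ flat of $\PG{n-1}{q}$; by Edmonds' Characterisation the colouring number of $\PG{n-1}{q}$ is $k = \lceil [n]_q/n \rceil = \Theta(q^n/n)$. The first, and conceptually simplest, step is a reduction: if $E = E_1 \cup \dots \cup E_\ell$ is a $(b,c)$-decomposition of $\PG{n-1}{q}$, then every flat $F$ meets at most $b\,r(F)$ of the classes $E_i$. Indeed, were $F$ to meet classes $E_{i_1},\dots,E_{i_m}$ with $m > b\,r(F)$, then picking $y_{i_j} \in E_{i_j}\cap F$ and completing arbitrarily to a transversal $Y$ would produce a subset $Y' = \{y_{i_1},\dots,y_{i_m}\} \subseteq Y$ consisting of $m$ distinct points (distinct because the classes are disjoint) contained in $F$, so $|Y'| = m > b\,r(F) \ge b\,r(Y')$; by Edmonds' Characterisation $Y'$, and therefore $Y$, is not $b$-colourable, contradicting condition~(ii).

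For the second step I would fix, once and for all, an integer $d \ge 2$ with $[d]_q > 2bd$; such a $d$ exists because $[d]_q \ge q^{d-1}$ grows exponentially in $d$ while $2bd$ is linear. Then I would double-count the incident pairs $(F,i)$ with $F$ a rank-$d$ flat and $E_i \cap F \ne \emptyset$. On the one hand, the reduction bounds this count by $bd\,\qbinom{n}{d}{q}$. On the other hand, since a point lies in $\qbinom{n-1}{d-1}{q}$ rank-$d$ flats and a pair of (necessarily independent) points in $\qbinom{n-2}{d-2}{q}$ of them, Bonferroni's inequality shows that $E_i$ meets at least $|E_i|\,\qbinom{n-1}{d-1}{q} - \binom{|E_i|}{2}\qbinom{n-2}{d-2}{q}$ rank-$d$ flats; summing over $i$ and using $\sum_i |E_i| = [n]_q$ together with $\sum_i \binom{|E_i|}{2} \le \tfrac12 (ck-1)[n]_q$ (which follows from $|E_i| \le ck$) bounds the count from below. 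Dividing both estimates by $\qbinom{n}{d}{q}$ and using the identities $\qbinom{n-1}{d-1}{q}/\qbinom{n}{d}{q} = [d]_q/[n]_q$ and $\qbinom{n-2}{d-2}{q}/\qbinom{n}{d}{q} = [d]_q[d-1]_q/([n]_q[n-1]_q)$ gives
\[
  bd \;\ge\; [d]_q\left(1 - \frac{(ck-1)[d-1]_q}{2[n-1]_q}\right).
\]
Since $d$ (hence $[d-1]_q$) is a fixed constant, $ck - 1 = O(q^n/n)$, and $[n-1]_q = \Theta(q^n)$, the subtracted term tends to $0$ as $n \to \infty$; so for all sufficiently large $n$ it is at most $\tfrac12$, whence $bd \ge [d]_q/2$, i.e.\ $[d]_q \le 2bd$ --- contradicting the choice of $d$. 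Therefore $\PG{n-1}{q}$ admits no $(b,c)$-decomposition.

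The main obstacle is the quantitative estimate in the second step: one must keep the quadratic Bonferroni correction negligible, which is exactly why $d$ has to be chosen as an absolute constant (depending only on $b$ and $q$) before letting $n \to \infty$, and why one needs the crude bounds $k \le [n]_q/n + 1$ and $[n]_q/[n-1]_q \le q+1$ to control the error term. Once the double-counting set-up and these estimates are in place, the rest is routine manipulation of $q$-binomial coefficients, and the only role of the field size is the exponential growth $[d]_q \ge q^{d-1}$ that makes the choice of $d$ possible.
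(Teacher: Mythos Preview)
Your argument is correct. Both steps are sound: the reduction ``$F$ meets at most $b\,r(F)$ classes'' follows exactly as you say, and the Bonferroni double-count goes through because in $\PG{n-1}{q}$ any two distinct points are independent, so the pair-count $\qbinom{n-2}{d-2}{q}$ is exact.

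Your route, however, differs from the paper's. The paper does not prove Theorem~\ref{thm:LMP-q} separately but deduces it from the $p=1$ case of Theorem~\ref{thm:main}; there the argument (following \cite{LeichterMoseleyPruhs2022}) takes $d=\lceil\log\log n\rceil$ growing with $n$, uses the weaker consequence ``every rank-$d$ flat $F$ with $\col{M|F}>b$ contains two points of a single class $E_i$'', and then \emph{upper-bounds} the number of rank-$d$ flats by $\ell\binom{ck}{2}\binom{q^n}{d-2}$ (choose the class, the pair inside it, and $d-2$ further points to span $F$), contradicting the lower bound $\qbinom{n}{d}{q}$. You instead fix $d$ once and for all, extract the sharper structural fact that every rank-$d$ flat meets at most $bd$ classes, and double-count flat--class incidences from below via Bonferroni. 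Your version is cleaner for the deterministic statement: the constant $d$ avoids tracking how $d$ interacts with $n$, and the incidence count yields the inequality $bd\ge[d]_q(1-o(1))$ directly. The paper's formulation, on the other hand, is tailored to the random model $\PGP{n-1}{q}{p}$: with $p<1$ one only controls \emph{dense} rank-$d$ flats, and letting $d\to\infty$ is what makes the density and full-rank conditions hold for almost all flats via Chernoff.
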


The crux in the argument of~\cite{LeichterMoseleyPruhs2022} is an analysis of flats of large (depending on $b$) rank in $\PG{n-1}{q}$. On the one hand, the number of such flats grows rapidly as $n$ grows. On the other hand, if $\PG{n-1}{q}$ is $(b,c)$-decomposable, such flats have large colouring number, and therefore their number can be bounded from above. For large~$n$, this leads to a contradiction.

Let $\PGP{n-1}{q}{p}$ be the random binary matroid obtained by restricting the full projective geometry $\PG{n-1}{q}$ to a random subset $E$ whose elements are chosen independently with probability $p$.
The main contribution of the current note is that the contradiction leading to the result of~\cite{LeichterMoseleyPruhs2022} still holds with high probability in the random submatroid $\PGP{n-1}{q}{p}$, and thus that a random $\GF{q}$-representable matroid is not $(b,c)$-decomposable.
\begin{theorem}\label{thm:main}
	Let $q\ge 2$ be a prime power and let $p \in (0,1]$. Let $b,c \ge 1$. With high probability\protect\footnote{A sequence of events $\mathcal{E}_n$, indexed by $n$, occurs \term{with high probability} when $\lim\limits_{n\to\infty} \prob{\mathcal{E}_n} = 1$, or equivalently, $\lim\limits_{n\to\infty} \prob{\mathcal{E}_n^c} = 0$.}, $\PGP{n-1}{q}{p}$ is not $(b,c)$-decomposable.
\end{theorem}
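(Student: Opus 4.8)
\emph{Proof plan.} Write $f_m=(q^m-1)/(q-1)$ for the number of points of $\PG{m-1}{q}$. The plan is to run the argument behind Theorem~\ref{thm:LMP-q} inside the random matroid: I would isolate two events that hold with high probability and then show that a $(b,c)$-decomposition is incompatible with their intersection once $n$ is large. The events are, for a constant $r=r(b,q)$ to be fixed below:
\begin{enumerate}[(a)]
\item $\col{\PGP{n-1}{q}{p}}\le 3pf_n/n$;
\item $\PGP{n-1}{q}{p}$ has at least $c_2\qbinom{n}{r}{q}$ flats of rank $r$ and size at least $f_r/2$, where $c_2=c_2(p,q,r)>0$.
\end{enumerate}
The theorem then follows once one proves $\prob{\text{(a)}},\prob{\text{(b)}}\to1$ and derives the incompatibility.

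\emph{The deterministic core.} Let $M$ be a restriction of $\PG{n-1}{q}$ carrying a $(b,c)$-decomposition $E(M)=E_1\cup\cdots\cup E_\ell$, and put $k=\col{M}$. Extending an arbitrary partial transversal (a set meeting every $E_i$ in at most one point) to a full transversal shows each partial transversal is $b$-colourable; inside a rank-$r$ flat $F$ of $M$, a transversal of the non-empty classes $E_i\cap F$ is then a $b$-colourable subset of the rank-$r$ matroid $M|F$, hence has at most $br$ elements, so $F$ meets at most $br$ classes. If $|F|\ge f_r/2$, some class $E_{i_0}$ has $|E_{i_0}\cap F|\ge f_r/(2br)$; since $E_{i_0}\cap F\subseteq F\cong\PG{r-1}{q}$ and a rank-$s$ flat of $\PG{r-1}{q}$ has only $f_s$ points, $r_M(E_{i_0}\cap F)\ge r-\delta$ with $\delta=\delta(b,q):=1+\lceil\log_q(2br)\rceil$. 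Choosing an independent $B_0\subseteq E_{i_0}\cap F$ with $|B_0|=r-\delta$ and extending it to a basis $B_0\cup B_1$ of $F$ (so $|B_1|=\delta$, $B_1\subseteq E(M)$) gives $F=\mathrm{cl}_M(B_0\cup B_1)$; thus $F$ is determined by $(i_0,B_0,B_1)$. As $|E_i|\le ck$, $\sum_i|E_i|=|E(M)|$ and $k\ge|E(M)|/n$ (Edmonds' characterisation, $r(M)\le n$), one gets $\sum_i\binom{|E_i|}{r-\delta}\le\tfrac{|E(M)|}{ck}\binom{ck}{r-\delta}\le n\binom{ck}{r-\delta}$, so
\[
\#\bigl\{F\text{ a rank-}r\text{ flat of }M,\ |F|\ge f_r/2\bigr\}\ \le\ n\binom{ck}{r-\delta}\binom{|E(M)|}{\delta}.
\]

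\emph{Closing, and the probabilistic inputs.} On event (a), $ck\le 3cpf_n/n$ and $|E(M)|\le f_n$, so the right side is at most $\tfrac{(3cp)^{r-\delta}}{(r-\delta)!}\,f_n^{r}/n^{r-\delta-1}$; on (b) the left side is at least $c_2\qbinom{n}{r}{q}\ge c_2(q-1)^rq^{-r^2}f_n^{r}$, using $\qbinom{n}{r}{q}\ge q^{r(n-r)}$. Cancelling $f_n^{r}$ gives $n^{r-\delta-1}\le K(b,c,p,q)$, false for large $n$ provided $r$ (a function of $b,q$ only) is chosen with $r-\delta-1\ge1$; such $r$ exists since $\delta$ grows only like $\log_q r$. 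For (a): by Edmonds, $\col{\PGP{n-1}{q}{p}}=\max_X\lceil|X|/r(X)\rceil$; a rank-$s$ set $X$ lies in some rank-$s$ flat $\bar F$ of $\PG{n-1}{q}$, so $|X|\le|\bar F\cap E|$, and $\prob{|\bar F\cap E|>2pf_s}\le e^{-pf_s/3}$ (Chernoff). As there are at most $q^{sn}$ rank-$s$ flats, a union bound over $s\ge C\log_q n$ (with $C=C(p,q)$ large enough that $pf_s\gg sn$) is $o(1)$, while $s<C\log_q n$ gives only $|X|/r(X)\le f_s=O_{p,q}(n^2)\ll f_n/n$; hence (a) holds whp. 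For (b): let $Z$ count the rank-$r$ flats $\bar F$ of $\PG{n-1}{q}$ with $\bar F=\mathrm{cl}_{\PG{n-1}{q}}(\bar F\cap E)$ and $|\bar F\cap E|\ge f_r/2$. Distinct such $\bar F$ give distinct rank-$r$ flats $\bar F\cap E$ of $\PGP{n-1}{q}{p}$ of size $\ge f_r/2$, so $Z$ bounds the count in (b) from below; $\expect{Z}\ge\qbinom{n}{r}{q}p^{f_r}\to\infty$, and $\mathrm{Var}(Z)=o(\expect{Z}^2)$ because the relevant indicators for $\bar F,\bar F'$ are independent unless $\bar F\cap\bar F'\ne\emptyset$, and the number of such pairs is at most $f_r\tfrac{q^r-1}{q^n-1}\qbinom{n}{r}{q}^2=o(\qbinom{n}{r}{q}^2)$. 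Chebyshev then gives $Z\ge\tfrac12\expect{Z}$ whp, so (b) holds with $c_2=\tfrac12 p^{f_r}$.

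\emph{Main obstacle.} I expect the delicate point to be the counting inequality of the deterministic core together with checking it is quantitatively strong enough: the entire gain stems from pinning down $r-\delta$ of a flat's points inside a single class of size $\le ck\approx cpf_n/n$ rather than among all $f_n$ points, which saves a factor of order $n^{r-\delta}$ and thereby beats the $\mathrm{poly}(n)$ losses; for this one needs $\delta$ small compared with $r$, i.e.\ the elementary fact that any positive-density subset of $\PG{r-1}{q}$ already has almost full rank. The genuinely new, probabilistic obstacle is (b): one must show that a positive proportion of the $\qbinom{n}{r}{q}$ projective rank-$r$ flats survive in the random matroid with \emph{both} full rank and at least half their points, which is exactly what the second-moment estimate---and the rarity of intersecting pairs of rank-$r$ flats---delivers.
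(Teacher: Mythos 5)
Your proposal is correct, and it takes a genuinely different route from the paper's. The paper works with flats of slowly growing rank $d=\lceil\log\log n\rceil$, calls such a flat dense when it keeps at least $\tfrac12 p\tfrac{q^d-1}{q-1}$ points, deduces via Edmonds that a dense flat is not $b$-colourable and hence meets some class in at least \emph{two} points, and encodes each dense flat by a pair inside one class plus $d-2$ arbitrary further points; the single factor of $n$ gained this way is just enough to beat the loss $q^{d^2}$ in $\qbinom{n}{d}{q}\ge q^{d(n-d)}$. Because each individual rank-$d$ flat survives as a dense flat with probability $1-o(1)$ once $d\to\infty$, a first-moment (Markov) argument suffices there for the flat count. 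You instead fix a constant rank $r=r(b,q)$ and prove a sharper deterministic lemma: a transversal of the classes meeting a heavy rank-$r$ flat is a $b$-colourable subset of a rank-$r$ matroid, so at most $br$ classes meet the flat, one class captures a positive fraction of its points and therefore spans all but $\delta=O(\log_q(br))$ of its rank; pinning down $r-\delta$ points inside a single class of size $O(q^n/n)$ buys a factor $n^{r-\delta-1}$, which beats all constants once $r\ge\delta+2$ (such $r$ exists since $\delta$ grows only logarithmically in $r$). The price is paid on the probabilistic side: for constant $r$ and $p<1$ an individual projective rank-$r$ flat survives with full rank and half its points only with probability bounded away from $1$, so the paper's first-moment argument no longer yields a whp statement, and your second-moment/Chebyshev step --- exploiting that indicators of disjoint flats are independent and that intersecting pairs of rank-$r$ flats form a vanishing proportion of all pairs --- is genuinely necessary and correctly executed. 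The remaining ingredients (the convexity bound $\sum_i\binom{|E_i|}{r-\delta}\le\tfrac{|E(M)|}{ck}\binom{ck}{r-\delta}$, the rank lower bound for positive-density subsets of $\PG{r-1}{q}$, and the Chernoff-plus-union-bound control of the colouring number, which mirrors the paper's Lemma~\ref{lemma:PGP-colourable}) all check out, modulo cosmetic points such as discarding empty classes and replacing $ck$ by $\lceil ck\rceil$. Your version has the mild advantage that the final contradiction holds with polynomial rather than barely superconstant room to spare, at the cost of a more involved concentration argument.
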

Note that Theorems~\ref{thm:LMP} and~\ref{thm:LMP-q} can be recovered from Theorem~\ref{thm:main} by choosing $p=1$.

Finally, we compare the situation for random $\GF{q}$-representable matroids with the situation for random $n$-element matroids.
While Theorem~\ref{thm:main} with $p=1/2$ implies that the random $\GF{q}$-representable matroid is, with high probability, not $(b,c)$-decomposable, it is likely that a random matroid on~$n$ elements is decomposable: It is believed that almost every matroid is paving~\cite{CrapoRota1970,MayhewNewmanWelshWhittle2011,PendavinghVanderpol2015}, and B\'{e}rczi, Schwarz, and Yamaguchi~\cite{BercziSchwarczYamaguchi2021} showed that paving matroids of rank at least~2 are $(1,2)$-decomposable.
The following probabilistic version of the original conjecture by B\'{e}rczi, Schwarcz, and Yamaguchi still seems likely.
\begin{conjecture}
	With high probability, the random matroid on ground set $[n]$ is $(1,3/2)$-decomposable.
\end{conjecture}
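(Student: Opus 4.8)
The plan is to exploit the fact that, with high probability, the random matroid $M$ on $[n]$ is extremely structured — it is paving, of rank close to $n/2$ — so that a completely naive partition into triples is already a $(1,3/2)$-decomposition.

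\emph{Step 1: the typical matroid.} It is a well-known conjecture that a $1-o(1)$ fraction of the matroids on $[n]$ are paving~\cite{CrapoRota1970,MayhewNewmanWelshWhittle2011,PendavinghVanderpol2015}; I would take this as the point of departure (and return to it as the main obstacle). Granting it, $M$ is paving with high probability. Its rank $r$ moreover satisfies $\lceil n/3\rceil<r<n$ with high probability: the number of rank-$\rho$ matroids on $[n]$ lies between $2^{\binom{n}{\rho}/(n+1)}$ (from the sparse paving count) and $2^{\binom{n}{\rho}}$ (counting sets of non-bases), and $\binom{n}{\rho}$ is so sharply peaked at $\rho=n/2$ that ranks outside $(\lceil n/3\rceil,n)$ contribute a doubly-exponentially negligible fraction. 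Since with high probability $M$ is paving with $2\le r<n$, it is loopless and not free, hence $\col{M}\ge 2$; in fact, when $M$ is additionally sparse paving (conjecturally the typical case) Edmonds' characterisation gives $\col{M}=\max\{2,\lceil n/r\rceil\}$, which is $2$ once $r\ge n/2$ — so $3/2$ is the critical constant here — though only $\col{M}\ge 2$ is used below.

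\emph{Step 2: the decomposition.} Partition $[n]$ into $\lceil n/3\rceil$ classes of size at most $3$: take $\lfloor n/3\rfloor$ triples and, if $3\nmid n$, one leftover class of size $1$ or $2$. Set $k=\col{M}$. Each class has size at most $3\le\tfrac32 k$ because $k\ge 2$, so requirement~(i) of the definition of $(b,c)$-decomposability holds with $c=3/2$. For requirement~(ii), let $Y$ be any transversal; then $|Y|=\lceil n/3\rceil\le r-1$, and in a paving matroid of rank $r$ every set of size at most $r-1$ is independent, so $Y$ is independent. Hence $M$ is $(1,3/2)$-decomposable with high probability.

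\emph{The main obstacle.} The argument is conditional on the (notoriously open) statement that almost every matroid is paving. Removing this assumption requires controlling the short circuits of a uniformly random matroid: it would in fact suffice to prove that with high probability $M$ has no circuit of size at most $\lceil n/3\rceil$ — a strict weakening of the paving conjecture that is nevertheless open — since then transversals of the triple-partition are still independent; alternatively one could look for an $M$-adapted partition into classes of size at most $\lfloor\tfrac32\col{M}\rfloor$ that avoids whatever short circuits are present. Either route runs into the barrier that obstructs the paving conjecture itself: outside the sparse paving model we have no usable description of the typical matroid, and in particular no control on the number and sizes of its short circuits. This is why the statement remains a conjecture.
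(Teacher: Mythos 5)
This statement is a conjecture in the paper, not a theorem: the paper offers no proof, only a heuristic justification that is itself conditional on the (open) conjecture that almost every matroid is paving. Your proposal is in exactly the same position, and to your credit you say so explicitly. Conditional on the paving hypothesis, your argument is correct and is essentially the paper's own reasoning made self-contained: the paper invokes B\'erczi--Schwarcz--Yamaguchi's Theorem~1.6 (paving matroids of rank $r$ admit a $\lceil rk/(r-1)\rceil$-colourable partition reduction) together with the fact that the rank is $\sim n/2$ with high probability \cite[Corollary~2.3]{LowranceOxleySempleWelsh2013}, whereas you unwind that theorem into the explicit triple partition: with at most $\lceil n/3\rceil \le r-1$ classes every transversal has size below the girth of a paving matroid, hence is independent, and $3\le\frac{3}{2}\col{M}$ since a loopless non-free matroid has colouring number at least $2$. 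The small checks are all right (a paving matroid of rank $r\ge 2$ is loopless since every circuit has size at least $r$; a rank bound $r>\lceil n/3\rceil$, weaker than the cited $r\sim n/2$, already suffices). Your back-of-the-envelope rank estimate in Step~1 is dispensable given the cited result, and the aside computing $\col{M}$ for sparse paving matroids is not needed.

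The genuine gap is the one you name: without the paving conjecture (or your proposed weakening, that a uniformly random matroid has no circuit of size at most $\lceil n/3\rceil$ with high probability), the argument does not get off the ground, and no unconditional proof is currently known. So this should be recorded as a correct conditional argument that matches the paper's intent, not as a resolution of the conjecture.
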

This conjecture is weaker than the original conjecture because it allows for a small number of matroids that are not $(b,c)$-decomposable. At the same time, the conclusion for the remaining matroids is stronger, as $3/2 < 2$. The improved constant can be explained as follows.
The random matroid on a ground set with~$n$ elements has, with high probability, rank asymptotic to $n/2$~\cite[Corollary~2.3]{LowranceOxleySempleWelsh2013}; a paving matroid of rank $r \sim n/2$ is $k$-colourable~\cite[Lemma~3.5]{BercziSchwarczYamaguchi2021} and has a $\left\lceil\frac{rk}{r-1}\right\rceil$-colourable partition reduction~\cite[Theorem~1.6]{BercziSchwarczYamaguchi2021} for some $k \in \{2,3\}$. Finally, for $k \in \{2,3\}$ and $r$ sufficiently large we have $\left\lceil\frac{rk}{r-1}\right\rceil = k+1 \le \frac{3}{2}k$.

The remainder of this note is structured as follows. In Section~\ref{sec:preliminaries}, we introduce some of the tools we require. Then, in Section~\ref{sec:main} we prove Theorem~\ref{thm:main}.

\section{Preliminaries}
\label{sec:preliminaries}

We require two probabilistic bounds. The first estimates tail probabilities for nonnegative random variables, and the second is a concentration bound for sums of independent random variables.

\begin{lemma}[Markov inequality]
	Let $X$ be a nonnegative random variable and let $\mu = \expect{X}$. Then for all $x > 0$
	\begin{equation*}
		\prob{X \ge x} \le \frac{\mu}{x}.
	\end{equation*}
\end{lemma}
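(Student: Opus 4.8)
The plan is to prove the inequality via the standard pointwise-domination argument. First I would introduce the indicator random variable $\mathbf{1}[X \ge x]$, which equals $1$ on the event $\{X \ge x\}$ and $0$ otherwise, and record the pointwise bound $X \ge x \cdot \mathbf{1}[X \ge x]$. This holds everywhere: on the event $\{X \ge x\}$ the right-hand side equals $x$, which is at most $X$ by definition of the event; on its complement the right-hand side equals $0$, which is at most $X$ by the assumed nonnegativity of $X$. This is the only place the nonnegativity hypothesis enters.

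Next I would take expectations of both sides of this inequality. By monotonicity of expectation the left-hand side contributes $\mu = \expect{X}$. By linearity the right-hand side contributes $x \cdot \expect{\mathbf{1}[X \ge x]}$, and since the expectation of the indicator of an event equals the probability of that event, this simplifies to $x \cdot \prob{X \ge x}$. Combining these gives $\mu \ge x \cdot \prob{X \ge x}$.

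Finally, I would divide through by $x$, which is legitimate precisely because $x > 0$ is assumed, to obtain $\prob{X \ge x} \le \mu / x$, as required. There is no genuine obstacle in this argument: it is a single application of the monotonicity and linearity of expectation to an elementary pointwise inequality, and the two hypotheses---nonnegativity of $X$ and positivity of $x$---are each used exactly once, in the two steps above.
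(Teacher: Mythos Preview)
Your argument is the standard, correct proof of Markov's inequality: the pointwise bound $X \ge x\,\mathbf{1}[X \ge x]$ followed by taking expectations and dividing by $x$ is exactly how this is usually done, and you have correctly identified where each hypothesis is used.

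The paper itself does not give a proof of this lemma; it is stated in the preliminaries as a standard tool without justification. So there is nothing to compare against, and your write-up would serve perfectly well as the omitted proof.
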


\begin{lemma}[Chernoff bound]
	Let $X_1, X_2, \ldots, X_N$ be independent random variables taking values in $\{0,1\}$. Let $X = \sum_{i=1}^N X_i$ and let $\mu = \expect{X}$. For all $0 \le \delta \le 1$,
	\begin{equation*}
		\prob{X\ge(1+\delta)\mu}\le\exp\left(-\frac{1}{3}\delta^2\mu\right)
	\end{equation*}
	and
	\begin{equation*}
		\prob{X\le(1-\delta)\mu}\le\exp\left(-\frac{1}{2}\delta^2\mu\right).
	\end{equation*}
\end{lemma}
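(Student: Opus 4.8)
The plan is to use the exponential moment method, deriving both tail bounds from the Markov inequality of the preceding lemma applied to an exponential transform of $X$. The key observation is that for any $t>0$ the events $\{X \ge a\}$ and $\{e^{tX} \ge e^{ta}\}$ coincide, so Markov gives $\prob{X \ge a} \le e^{-ta}\expect{e^{tX}}$; since $X = \sum_{i=1}^N X_i$ with the $X_i$ independent, the moment generating function factors as $\expect{e^{tX}} = \prod_{i=1}^N \expect{e^{tX_i}}$.

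For the upper tail I would write $p_i = \prob{X_i = 1}$, so that $\expect{e^{tX_i}} = 1 + p_i(e^t - 1)$, and then apply the elementary inequality $1 + x \le e^x$ to each factor. This yields $\expect{e^{tX_i}} \le \exp(p_i(e^t-1))$, and multiplying over $i$ (using $\sum_i p_i = \mu$) gives $\expect{e^{tX}} \le \exp((e^t-1)\mu)$. Combining with Markov at the threshold $a = (1+\delta)\mu$ produces $\prob{X \ge (1+\delta)\mu} \le \exp\bigl((e^t-1)\mu - t(1+\delta)\mu\bigr)$ for every $t>0$. The exponent is minimised at $t = \ln(1+\delta)$, which gives the classical bound $\bigl(e^\delta/(1+\delta)^{1+\delta}\bigr)^\mu$. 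The lower tail is handled symmetrically: for $t > 0$ I would apply Markov to $e^{-tX}$, note $\expect{e^{-tX_i}} = 1 + p_i(e^{-t}-1) \le \exp(p_i(e^{-t}-1))$, and minimise the resulting exponent at $t = \ln\frac{1}{1-\delta}$ to obtain $\prob{X \le (1-\delta)\mu} \le \bigl(e^{-\delta}/(1-\delta)^{1-\delta}\bigr)^\mu$.

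The main obstacle is purely analytic: one must replace these sharp but unwieldy rate functions by the stated Gaussian-type bounds valid on the whole range $0 \le \delta \le 1$. Concretely, it remains to verify the two elementary inequalities $(1+\delta)\ln(1+\delta) - \delta \ge \tfrac{1}{3}\delta^2$ and $\delta + (1-\delta)\ln(1-\delta) \ge \tfrac{1}{2}\delta^2$ for $\delta \in [0,1]$, which are exactly the logarithms of the desired bounds. Each follows by defining the difference as a function of $\delta$ that vanishes at $\delta = 0$ and checking that its derivative is nonnegative on $[0,1]$. For the second inequality the derivative equals $-\ln(1-\delta) - \delta$, which is nonnegative because $-\ln(1-\delta) \ge \delta$; for the first, the derivative $\ln(1+\delta) - \tfrac{2}{3}\delta$ vanishes at $\delta = 0$, rises to a maximum at $\delta = 1/2$ and then falls, yet stays positive at $\delta = 1$ (where it equals $\ln 2 - \tfrac{2}{3} > 0$), and is therefore nonnegative throughout. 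The boundary case $\delta = 1$ of the lower tail, where $(1-\delta)\ln(1-\delta)$ is read by continuity as $0$, needs no separate treatment.
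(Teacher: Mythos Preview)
Your proof is correct and is the standard exponential-moment argument for the Chernoff bound. Note, however, that the paper does not prove this lemma at all: it is stated in the Preliminaries section as a known tool and used without justification, so there is no proof in the paper to compare yours against.
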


We write $\qbinom{n}{d}{q}$ for $q$-binomial coefficients; that is, for $0\le d\le n$ and $q>1$
\begin{equation*}
	\qbinom{n}{d}{q} = \prod_{j=0}^{d-1} \frac{q^{n-j}-1}{q^{d-j}-1}.
\end{equation*}
When $q \ge 2$ is a prime power, $\qbinom{n}{d}{q}$ counts the number of rank-$d$ flats in~$\PG{n-1}{q}$. The following standard bounds are useful for estimating $q$-binomial coefficients.
\begin{lemma}\label{lemma:qbinom}
	$q^{d(n-d)} \le \qbinom{n}{d}{q} \le q^{d(n-d+1)}$ for all $\le d\le n$ and $q > 1$.
\end{lemma}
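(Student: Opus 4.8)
The plan is to bound the $q$-binomial coefficient factor by factor. Starting from the product in the definition and substituting $a = n-j$ and $b = d-j$, each of the $d$ factors (indexed by $j = 0,1,\ldots,d-1$) takes the form $\frac{q^a-1}{q^b-1}$, where the ``gap'' $a-b = n-d \ge 0$ is the same for every factor and $b = d-j$ ranges over $\{1,\ldots,d\}$, so in particular $b \ge 1$. The whole argument then reduces to sandwiching one such factor between $q^{n-d}$ and $q^{n-d+1}$ and multiplying the $d$ resulting inequalities.

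For the lower bound I would show $\frac{q^a-1}{q^b-1} \ge q^{a-b} = q^{n-d}$ for each factor. Clearing the denominator, this is equivalent to $q^a - 1 \ge q^{a-b}(q^b-1) = q^a - q^{a-b}$, that is, to $q^{a-b} \ge 1$; since $a-b = n-d \ge 0$ and $q > 1$, this holds. Taking the product over all $d$ factors yields $\qbinom{n}{d}{q} \ge q^{d(n-d)}$, and this half of the statement is in fact valid for every $q > 1$.

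For the upper bound I would establish $\frac{q^a-1}{q^b-1} \le q^{a-b+1} = q^{n-d+1}$ for each factor. Clearing the denominator reduces this to $q^{a-b+1} - 1 \le q^a(q-1)$. Here both hypotheses on the factor enter: because $b \ge 1$ we have $a-b+1 \le a$ and hence $q^{a-b+1}-1 < q^a$, while $q \ge 2$ gives $q^a \le q^a(q-1)$; chaining the two yields the claim. Multiplying the $d$ factors gives $\qbinom{n}{d}{q} \le q^{d(n-d+1)}$. The degenerate case $d=0$ is immediate, since the empty product equals $1 = q^0$, matching both bounds.

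The one delicate point — and the step I would flag as the real obstacle — is that the upper bound genuinely needs $q \ge 2$, not merely $q > 1$: the estimate $q^a \le q^a(q-1)$ fails for $1 < q < 2$. Indeed, for $d=1$ one has $\qbinom{n}{1}{q} = \frac{q^n-1}{q-1}$, which exceeds $q^n$ as soon as $q^n(2-q) > 1$, so the stated upper bound is false for fixed $q \in (1,2)$ and large $n$. Since the lemma is only ever applied to prime powers $q \ge 2$ in the sequel, this causes no problem in practice; I would therefore either read the hypothesis as $q \ge 2$, or, if the bound must hold verbatim for all $q > 1$, replace the per-factor estimate by $\frac{q^a-1}{q^b-1} < q^{a-b}\cdot\frac{q}{q-1}$ and note that for $q \ge 2$ the correction $\frac{q}{q-1} \le q$ collapses this back to $q^{a-b+1}$.
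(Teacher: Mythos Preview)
The paper does not actually prove this lemma; it is stated as a ``standard bound'' and left without proof, so there is no argument to compare against. Your factor-by-factor sandwich is the expected proof and is carried out correctly: the lower bound reduces each factor to $q^{a-b}\ge 1$, and the upper bound reduces to $q^{a-b+1}-1 \le q^a(q-1)$, which your two-step chain handles for $q\ge 2$.

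Your observation that the upper bound genuinely fails for $1<q<2$ is sharp and correct --- the $d=1$ case $\qbinom{n}{1}{q}=\frac{q^n-1}{q-1}>q^n$ for large $n$ is a clean counterexample --- so the lemma as literally stated is slightly misstated. As you note, this is harmless for the paper, which only ever invokes the bound for prime powers $q\ge 2$; reading the hypothesis as $q\ge 2$ (or noting that the upper bound holds for $q\ge 2$) is the right fix.
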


Throughout, we use $o(1)$ to denote a quantity that tends to~0 as the parameter~$n$ tends to infinity. We write $a = (1\pm b)c$ as shorthand for $(1-b)c \le a \le (1+b)c$.

\section{Proof of Theorem~\ref{thm:main}}
\label{sec:main}

\subsection{Random subsets of projective geometries}

We obtain a random submatroid of the projective geometry $\PG{n-1}{q}$ by retaining each of its elements independently with probability $p$. Writing $E_p$ for the resulting random set of points, we set $\PGP{n-1}{q}{p} = \PG{n-1}{q}|E_p$. This model of random $\GF{q}$-representable matroids was first studied by Kelly and Oxley~\cite{KellyOxley1982}, who obtained results about rank, connectivity, and critical exponent\protect\footnote{The critical exponent is also known as the critical number.} in this model.

\subsection{Size, rank, and colouring number of \texorpdfstring{$\PGP{n-1}{q}{p}$}{PGp(n-1,q)}}

In the following three lemmas, we analyse the size, rank, and colouring number of the random matroid $\PGP{n-1}{q}{p}$.

\begin{lemma}\label{lemma:PGP-size}
	Let $q\ge 2$ be a prime power and let $p \in (0,1]$. Let $\delta > 0$. With high probability, $|\PGP{n-1}{q}{p}| = (1\pm\delta)p\frac{q^n}{q-1}$. 
\end{lemma}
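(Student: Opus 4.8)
The plan is to express $|\PGP{n-1}{q}{p}|$ as a sum of independent indicator random variables and apply the Chernoff bound. Concretely, the ground set of $\PG{n-1}{q}$ consists of $N := \qbinom{n}{1}{q} = \frac{q^n - 1}{q - 1}$ points, and for each point $e$ we let $X_e$ be the indicator that $e \in E_p$; these are independent $\{0,1\}$-valued random variables with $\prob{X_e = 1} = p$. Then $|\PGP{n-1}{q}{p}| = X := \sum_e X_e$ has mean $\mu = \expect{X} = pN = p\frac{q^n - 1}{q - 1}$.

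The next step is to observe that $\mu = p\frac{q^n-1}{q-1}$ differs from $p\frac{q^n}{q-1}$ by only $\frac{p}{q-1}$, so for large $n$ we have $\mu = (1 \pm o(1))p\frac{q^n}{q-1}$; in particular $\mu \to \infty$ exponentially fast in $n$. It therefore suffices to show that, with high probability, $X = (1 \pm \delta')\mu$ for a slightly smaller $\delta'$ (say $\delta' = \delta/2$), since then $X = (1\pm\delta)p\frac{q^n}{q-1}$ once $n$ is large enough that the $o(1)$ discrepancy is absorbed. Apply the Chernoff bound with this $\delta'$: the two tail estimates give
\begin{equation*}
	\prob{X \ge (1+\delta')\mu} \le \exp\!\left(-\tfrac{1}{3}\delta'^2\mu\right), \qquad
	\prob{X \le (1-\delta')\mu} \le \exp\!\left(-\tfrac{1}{2}\delta'^2\mu\right).
\end{equation*}
Since $\mu$ grows like a constant times $q^n$, both right-hand sides tend to $0$ as $n \to \infty$, so by a union bound $\prob{X \neq (1\pm\delta')\mu} \to 0$. (If $\delta' > 1$ one can simply replace it by $\min\{\delta', 1\}$ before applying the bound; shrinking the deviation only strengthens the conclusion, and the case $p = 1$ is trivial since then $X = N$ deterministically.)

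There is essentially no serious obstacle here — this is a textbook concentration argument — but the one point requiring a little care is the bookkeeping between the three constants: the target deviation $\delta$, the deviation $\delta'$ actually fed to Chernoff, and the additive slack coming from replacing $q^n - 1$ by $q^n$ inside the denominator-adjusted mean. The cleanest way to handle this is to fix $\delta' = \delta/2$ at the outset, note that $(1 \pm \delta')\mu \subseteq (1\pm\delta)p\frac{q^n}{q-1}$ for all sufficiently large $n$, and then quote the Chernoff bound once. The exponential growth of $\mu$ does all the heavy lifting in making the failure probability vanish.
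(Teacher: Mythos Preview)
Your proof is correct and follows the same approach as the paper: write $|\PGP{n-1}{q}{p}|$ as a sum of $\frac{q^n-1}{q-1}\sim\frac{q^n}{q-1}$ independent Bernoulli$(p)$ indicators and apply the Chernoff bound. The paper states this in a single sentence; your added bookkeeping with $\delta'=\delta/2$ to absorb the $q^n-1$ versus $q^n$ discrepancy is just a spelled-out version of the same argument.
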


\begin{proof}
	This follows immediately from the Chernoff bound, upon observing that $|\PGP{n-1}{q}{p}|$ is the sum of $\frac{q^n-1}{q-1} \sim \frac{q^n}{q-1}$ independent indicator random variables, each with expected value~$p$.
\end{proof}

The next lemma was proved in~\cite[Theorem~4]{KellyOxley1982}, where it was shown that with high probability $\PGP{n-1}{q}{p}$ contains an $(n+1)$-circuit of $\PG{n-1}{q}$. Here, we provide an alternative proof.

\begin{lemma}\label{lemma:PGP-rank}
	Let $q\ge 2$ be a prime power and let $p \in (0,1]$. With high probability, $\PGP{n-1}{q}{p}$ is of full rank, that is $r(\PGP{n-1}{q}{p})=n$.
\end{lemma}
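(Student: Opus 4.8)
The plan is to show that, with high probability, the random set $E_p$ is not contained in any hyperplane of $\PG{n-1}{q}$, which is equivalent to $\PGP{n-1}{q}{p}$ having full rank $n$. A hyperplane of $\PG{n-1}{q}$ is a rank-$(n-1)$ flat, and there are exactly $\qbinom{n}{n-1}{q} = \qbinom{n}{1}{q} = \frac{q^n-1}{q-1}$ of them, each containing exactly $\frac{q^{n-1}-1}{q-1}$ points of the full projective geometry. So the first step is to fix a hyperplane $H$ and bound the probability that $E_p \subseteq H$: since the points are retained independently with probability $p$, and at least $\frac{q^n-1}{q-1} - \frac{q^{n-1}-1}{q-1} = q^{n-1}$ points lie outside $H$, we get $\prob{E_p \subseteq H} = (1-p)^{|\,\PG{n-1}{q} \setminus H\,|} \le (1-p)^{q^{n-1}}$.

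The second step is a union bound over all hyperplanes. The probability that $\PGP{n-1}{q}{p}$ fails to have full rank is at most
\begin{equation*}
	\qbinom{n}{1}{q} \cdot (1-p)^{q^{n-1}} \le q^n \cdot (1-p)^{q^{n-1}},
\end{equation*}
using Lemma~\ref{lemma:qbinom} (or the exact count) for the number of hyperplanes. For fixed $p \in (0,1]$ we have $0 \le 1-p < 1$, so $(1-p)^{q^{n-1}}$ decays doubly exponentially in $n$ while $q^n$ grows only singly exponentially; hence the product tends to~$0$ as $n \to \infty$. (When $p=1$ the bound is trivially $0$ for all $n \ge 1$.) This gives the claim.

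There is essentially no obstacle here: the only mild point of care is handling the edge case $p = 1$ separately, or simply noting that $(1-p) < 1$ suffices for the estimate. One could alternatively phrase the union-bound step via the Markov inequality applied to the number of hyperplanes containing $E_p$ — its expectation is $\qbinom{n}{1}{q}(1-p)^{q^{n-1}}$, which tends to~$0$ — but the direct union bound is cleaner. No concentration inequality is actually needed for this lemma; the Chernoff bound will be reserved for the colouring-number estimate that follows.
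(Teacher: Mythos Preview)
Your argument is correct and essentially identical to the paper's own proof: observe that failure of full rank means $E_p$ lies in some hyperplane, then apply a union bound over the $\qbinom{n}{n-1}{q}=\frac{q^n-1}{q-1}$ hyperplanes, each of which contains $E_p$ with probability $(1-p)^{q^{n-1}}$, yielding an $o(1)$ bound. The paper is slightly terser but the route and the computation are the same.
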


\begin{proof}
	If $\PGP{n-1}{q}{p}$ is not of full rank, then $E_p$ is contained in a hyperplane of $\PG{n-1}{q}$. By the union bound, this happens with probability at most
	\begin{equation*}
		\qbinom{n}{n-1}{q} (1-p)^{q^{n-1}} = \frac{q^n-1}{q-1} (1-p)^{q^{n-1}} = o(1). \qedhere
	\end{equation*}
\end{proof}

\begin{lemma}\label{lemma:PGP-colourable}
	Let $q\ge 2$ be a prime power and let $p \in (0,1]$. Let $\delta > 0$. With high probability, $\col{\PGP{n-1}{q}{p}} = (1\pm\delta)p\frac{q^n}{(q-1)n}$.
\end{lemma}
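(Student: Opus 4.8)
The plan is to combine Edmonds' Characterisation with the first two lemmas of this subsection. By Edmonds' Characterisation, $\col{\PGP{n-1}{q}{p}} = \max_{X} \lceil |X|/r(X)\rceil$ over subsets $X$ of positive rank; the natural candidate for the extremal $X$ is the whole ground set $E_p$, which by Lemma~\ref{lemma:PGP-size} has size $(1\pm\delta')p\frac{q^n}{q-1}$ and by Lemma~\ref{lemma:PGP-rank} has rank exactly $n$, both with high probability. This immediately gives the lower bound
\[
	\col{\PGP{n-1}{q}{p}} \ge \left\lceil \frac{|E_p|}{r(E_p)} \right\rceil \ge (1-\delta')\,p\,\frac{q^n}{(q-1)n},
\]
with high probability, for any $\delta' > 0$; choosing $\delta'$ small in terms of $\delta$ handles the lower half of the claimed interval.

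For the upper bound I need to show that no subset $X$ has ratio $|X|/r(X)$ much larger than $|E_p|/n$. Split according to the rank $d = r(X)$. A rank-$d$ subset of $\PG{n-1}{q}$ lies in some rank-$d$ flat $F$, of which there are $\qbinom{n}{d}{q} \le q^{d(n-d+1)}$ by Lemma~\ref{lemma:qbinom}; such a flat has $\frac{q^d-1}{q-1} \le q^d$ points of $\PG{n-1}{q}$, so $|X \cap E_p|$ is dominated by a Binomial$(q^d, p)$ random variable with mean at most $p q^d$. The danger is that for small $d$ the expected size $pq^d$ is comparable to $d$ (or smaller), so the ratio $|X|/d$ could spike; but for small $d$ the ratio is at most $q^d/d \le q^d$, which is $\ll \frac{q^n}{(q-1)n}$ as long as $d$ is, say, at most $n - 2\log_q n$, so these contribute nothing. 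For the remaining range, $n - 2\log_q n \le d \le n$, I apply the Chernoff upper-tail bound to each flat: the probability that a fixed rank-$d$ flat contains more than $(1+\eta)pq^d$ points of $E_p$ is at most $\exp(-\tfrac{1}{3}\eta^2 p q^d) \le \exp(-\tfrac{1}{3}\eta^2 p q^{n - 2\log_q n}) = \exp(-\Omega(q^n/n^2))$, which beats the union-bound factor $\sum_d \qbinom{n}{d}{q} \le (n+1)q^{n^2/4}$ — wait, that is not small enough, so I instead union-bound only over the relevant range $d \ge n - 2\log_q n$, where $\qbinom{n}{d}{q} \le q^{d(n-d+1)} \le q^{(2\log_q n + 1)\cdot n} = n^{O(n)} = e^{O(n\log n)}$, and this is comfortably killed by the $\exp(-\Omega(q^n/n^2))$ tail bound (taking $\eta$ a small constant). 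Thus, with high probability, every rank-$d$ flat with $d \ge n - 2\log_q n$ meets $E_p$ in at most $(1+\eta)pq^d$ points.

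**Putting it together:** on the high-probability event above, for any $X$ of rank $d \ge n - 2\log_q n$ we have $|X|/r(X) \le (1+\eta)pq^d/d \le (1+\eta)p q^n/(n - 2\log_q n)$, and the ceiling adds at most $1$; since $(1+\eta)p q^n/(n - 2\log_q n) = (1 + o(1))\, p q^n / n$, this is at most $(1+\delta)p\frac{q^n}{(q-1)n}$ for $n$ large once $\eta$ is chosen small enough relative to $\delta$. Combined with the contribution from small $d$ (handled above) and the matching lower bound, Edmonds' Characterisation yields $\col{\PGP{n-1}{q}{p}} = (1\pm\delta)p\frac{q^n}{(q-1)n}$ with high probability.

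**The main obstacle** is the union bound over flats in the upper-bound argument: naively summing over all $\binom{n}{d}_q$ flats of all ranks $d$ costs a factor as large as $q^{n^2/4}$, which the Chernoff tail $\exp(-\Omega(pq^d))$ cannot absorb when $d$ is small. The resolution — already sketched — is that one only needs the flat-by-flat estimate for $d$ within $O(\log_q n)$ of $n$ (where the flat count is merely $e^{O(n\log n)}$, easily beaten by $\exp(-\Omega(q^n/n^2))$), because for smaller $d$ the crude deterministic bound $|X|/r(X) \le q^d/d$ is already far below the target. Getting the two regimes to meet cleanly, and tracking that all the $o(1)$ error terms can be absorbed into a single $\delta$, is the only real bookkeeping in the proof.
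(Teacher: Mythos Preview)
Your approach is essentially the paper's: the lower bound comes from Edmonds' formula applied to $X = E_p$ via the two preceding lemmas, and the upper bound splits according to rank, handling $d \le n - 2\log_q n$ deterministically and $d \ge n - 2\log_q n$ by Chernoff plus a union bound over rank-$d$ flats. One cosmetic difference: by parametrising directly by $d = r(X)$ and passing to the closure of $X$ in $\PG{n-1}{q}$, you sidestep the separate estimate the paper makes for the event $r(E_p \cap F) < r(F)$, which is a mild simplification.

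There is, however, an arithmetic slip that breaks your upper bound for $q \ge 3$. A rank-$d$ flat has $\frac{q^d-1}{q-1}$ points; your over-count by $q^d$ is harmless in the small-$d$ regime and in the Chernoff exponent, but not in the final comparison. You conclude $|X|/r(X) \le (1+\eta)pq^d/d \le (1+o(1))\,pq^n/n$ and then assert this is at most $(1+\delta)p\frac{q^n}{(q-1)n}$; for $q \ge 3$ and any $\delta < q-2$ that inequality is false, off by a factor of $q-1$. The fix is simply to carry the correct flat size throughout: apply Chernoff with threshold $(1+\eta)p\frac{q^d-1}{q-1}$, so that on the good event
\[
	\frac{|X|}{r(X)} \le (1+\eta)\,p\,\frac{q^d-1}{(q-1)d} \le (1+\eta)\,p\,\frac{q^n}{(q-1)n},
\]
and choosing $\eta < \delta$ finishes the job.
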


\begin{proof}
	We first prove the lower bound. By Lemma~\ref{lemma:PGP-size} and Lemma~\ref{lemma:PGP-rank}, with high probability, $\PGP{n-1}{q}{p}$ has at least $(1-\delta)p\frac{q^n-1}{q-1}$ points and has rank $n$. It follows from Edmonds' characterisation of the colouring number that
	\begin{equation*}
		\col{\PGP{n-1}{q}{p}} \ge \frac{(1-\delta)p\frac{q^n}{q-1}}{n}
	\end{equation*}
	with high probability.
	
	To prove the corresponding upper bound, it suffices to show that, with high probability,
	\begin{equation}\label{eq:small-flat}
		\text{for every flat $F$ of $\PG{n-1}{q}$:}\quad|E_p \cap F| \le (1+\delta)p\frac{q^n}{(q-1)n} r(E_p \cap F).
	\end{equation}
	We may assume that $n \ge \frac{1}{(1+\delta)p}$.
	
	Let $F$ be a flat of $\PG{n-1}{q}$ of rank $t > 0$. If $t \le n - 2\log_q n$, then
	\begin{equation*}
		|F| = \frac{q^t-1}{q-1}
			< \frac{q^{n-2\log_q n}}{q-1}
			= \frac{q^n}{(q-1)n^2}
			\le (1+\delta) p \frac{q^n}{(q-1)n},
	\end{equation*}
	so~\eqref{eq:small-flat} holds for all flats $F$ of rank at most $n-2\log_q n$.
	
	Next, let $t \ge n - 2\log_q n$. If $r(E_p \cap F) < t$, then $F$ contains a rank-$(t-1)$ flat $F'$ such that $E_p \cap F' = \emptyset$. This happens with probability at most
	\begin{equation*}
		\qbinom{t}{t-1}{q} (1-p)^{q^{t-1}}
		\le q^t(1-p)^{q^{t-1}}
	\end{equation*}
	By the Chernoff bound, the probability that $|E_p \cap F|$ is larger than $(1+\delta)p|F|$ is at most
	\begin{equation*}
		\exp\left(-\frac{1}{3}\delta^2p|F|\right)
		\le \exp\left(-\frac{1}{3}\delta^2pq^{t-1}\right).
	\end{equation*}
	It follows that for a flat $F$ of rank $t$, \eqref{eq:small-flat} fails with probability at most
	\begin{equation*}
		q^t(1-p)^{q^{t-1}}
			+\exp\left(-\frac{1}{3}\delta^2pq^{t-1}\right).
	\end{equation*}
	Summing over all flats of rank~$t$, it follows that~\eqref{eq:small-flat} fails with probability at most
	\begin{equation*}
		\sum_{t = n - 2\log_q n}^n \qbinom{n}{t}{q} 
			\left(
			q^t(1-p)^{q^{t-1}}
				+\exp\left(-\frac{1}{3}\delta^2pq^{t-1}\right)
			\right)
			= o(1).
	\end{equation*}
	Thus, \eqref{eq:small-flat} holds with high probability, which concludes the proof.
\end{proof}

\subsection{Proof of the main theorem}

We now prove Theorem~\ref{thm:main}, which we restate here for convenience.

\setcounter{lemma}{3}
\begin{theorem}
	Let $q\ge 2$ be a prime power and let $p \in (0,1]$. Let $b, c \ge 1$. With high probability, $\PGP{n-1}{q}{p}$ is not $(b,c)$-decomposable.
\end{theorem}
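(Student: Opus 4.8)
The plan is to mimic the counting argument of Leichter--Moseley--Pruhs, but carried out inside the random submatroid. Suppose for contradiction that $\PGP{n-1}{q}{p}$ admits a $(b,c)$-decomposition $E_p = E_1 \cup \ldots \cup E_\ell$, and write $k = \col{\PGP{n-1}{q}{p}}$; by Lemma~\ref{lemma:PGP-colourable} we may condition on $k = (1\pm\delta)p\frac{q^n}{(q-1)n}$. The two competing quantities are: (a) the number of flats of $\PG{n-1}{q}$ of some fixed large rank $d = d(b)$, which is $\qbinom{n}{d}{q} \ge q^{d(n-d)}$ and hence grows like $q^{\Theta(n)}$ raised to a power proportional to $d$; and (b) an upper bound on how many such flats can appear as (the closure of) a transversal of the decomposition classes. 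The key structural input from the decomposition is that every transversal is $b$-colourable, so by Edmonds' characterisation every transversal $Y$ satisfies $|Y| \le b\, r(Y)$; restricting attention to transversals contained in a rank-$d$ flat, each such flat can contain at most $bd$ elements of any transversal.

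First I would set up the counting dichotomy precisely. Fix $d$ large enough (depending on $b$, $c$) that $d > 2$ and the arithmetic below goes through; the relevant inequality will be something like $d(n-d) > (\text{constant depending on }b,c)\cdot n$ eventually, which holds once $d$ is a large constant and $n \to \infty$. Each class $E_i$ has $|E_i| \le ck$. I would bound the number of rank-$d$ flats $F$ of $\PG{n-1}{q}$ such that $F$ contains a transversal-selectable set of size $> d$ — but more to the point, I would count incidences between rank-$d$ flats of $\PG{n-1}{q}$ and the classes $E_i$: a flat $F$ "uses" a class $E_i$ if $E_i \cap F \neq \emptyset$. Since any transversal picks one point per class and every rank-$d$ flat's intersection with a transversal has size at most $bd$, a rank-$d$ flat that is "spanned by the decomposition" in the relevant sense meets at most ... — here I need the LMP combinatorial lemma: roughly, the number of rank-$d$ flats is at most (number of ways to choose $\le bd$ classes) times (number of transversal choices within them), i.e.\ at most $\binom{\ell}{bd} (ck)^{bd}$ or a similar polynomial-in-$\ell$ bound. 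Combined with $\ell \le |E_p| \le (1+\delta)p\frac{q^n}{q-1}$ and $k \le (1+\delta)p\frac{q^n}{(q-1)n}$, this gives an upper bound of the form $\bigl(q^n\bigr)^{O_b(1)} \cdot n^{-\Theta_b(1)}$ on the number of rank-$d$ flats, with the exponent of $q^n$ a fixed function of $b$ (something like $2bd$ after using $\ell, k \le q^{n+O(1)}$). Choosing $d$ large enough that $d(n-d) = dn - d^2$ exceeds $2bd \cdot n$ is impossible since the exponent of $q^n$ scales with $d$ too — so the actual LMP trick must be subtler: the point is that each flat contributes to the count but the bound on transversal-intersections means a single collection of $\le bd$ classes accounts for a flat, so the logarithm base $q$ of the flat count is at most $O_b(1)\cdot n$ with the hidden constant \emph{independent of $d$} after optimizing, while $\log_q \qbinom{n}{d}{q} \sim d(n-d) \gtrsim dn/2$ grows linearly in $d$. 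Taking $d$ large enough relative to that fixed constant yields the contradiction.

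The steps in order: (1) condition on the high-probability events from Lemmas~\ref{lemma:PGP-size}, \ref{lemma:PGP-rank}, \ref{lemma:PGP-colourable}, so that $|E_p|$, $r(E_p)=n$, and $k = \col{\PGP{n-1}{q}{p}}$ are pinned down up to $(1\pm\delta)$; (2) assume a $(b,c)$-decomposition exists and extract $|E_i| \le ck$ and the $b$-colourability of transversals, hence $|Y \cap F| \le b\,r(F) = bd$ for every rank-$d$ flat $F$; (3) prove the combinatorial counting lemma bounding the number of rank-$d$ flats of $\PG{n-1}{q}$ (equivalently, of those meeting $E_p$ in rank $d$, but every rank-$d$ flat of $\PG{n-1}{q}$ contains $\sim q^{d-1}$ points of $E_p$ whp, so essentially all of them) in terms of $\ell$, $k$, $b$, $d$; (4) plug in $\ell \le (1+\delta)p q^n/(q-1)$, $k \le (1+\delta)pq^n/((q-1)n)$ to get an upper bound $q^{C_b n + o(n)}$ with $C_b$ \emph{not depending on $d$}; (5) compare with the lower bound $\qbinom{n}{d}{q} \ge q^{d(n-d)}$ from Lemma~\ref{lemma:qbinom}, choose $d = d(b)$ with $d - $ (small correction) $> C_b$, and derive a contradiction for all large $n$.

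The main obstacle is step (3): getting the counting lemma to produce a bound whose $q^n$-exponent does \emph{not} grow with $d$. The naive bound "choose $\le bd$ classes out of $\ell$, then one point from each" gives $\binom{\ell}{bd}(ck)^{bd} \le \ell^{bd} k^{bd}$, whose logarithm is $\approx 2bd\cdot n$, which \emph{does} grow with $d$ and kills the argument. The LMP insight (which I would need to reconstruct carefully for general $q$) is that one should not bound all rank-$d$ flats this way; rather, one uses a more efficient charging — e.g.\ counting only flats through a fixed low-rank flat, or iterating the argument rank by rank, or exploiting that a transversal is a \emph{single} set so the $bd$ points chosen across all flats overlap heavily — so that the effective exponent is a fixed constant times $n$ with no $d$-dependence, or grows only like $d/\log d$, slower than the $d\cdot n$ growth of $\log_q \qbinom{n}{d}{q}$. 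Verifying that this efficiency survives the passage to the random submatroid — i.e.\ that whp enough rank-$d$ flats of $\PG{n-1}{q}$ still have rank $d$ in $\PGP{n-1}{q}{p}$, which follows from a union bound as in Lemma~\ref{lemma:PGP-colourable} since a rank-$d$ flat fails to keep rank $d$ only if one of its $\qbinom{d}{d-1}{q} \le q^d$ corank-$1$ subflats is missed entirely, probability $\le q^d(1-p)^{q^{d-1}}$, negligible against $\qbinom{n}{d}{q}$ for fixed $d$ — is routine once the deterministic counting lemma is in hand.
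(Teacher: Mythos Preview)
Your overall strategy --- count rank-$d$ flats from above using the hypothetical decomposition and from below via Lemma~\ref{lemma:qbinom}, then derive a contradiction --- matches the paper's, but you have not located the key combinatorial step, and your intuition about the exponents is off. The pivotal observation is this: if $F$ is a rank-$d$ flat with $\col{M|F} > b$, then the points of $F \cap E_p$ cannot all lie in distinct classes $E_i$, for otherwise $F \cap E_p$ would extend to a full transversal and hence be $b$-colourable. So some class $E_i$ meets $F$ in at least two points. This yields an encoding of every such flat by a triple (an index $i \in [\ell]$, a pair of points in $E_i$, and $d-2$ further points spanning the rest of $F$), giving the upper bound $\ell\binom{ck}{2}\binom{q^n}{d-2}$. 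Since every transversal has $\ell$ elements and is $b$-colourable in a rank-$n$ matroid, $\ell \le bn$; combined with $k \approx p q^n/((q-1)n)$ this makes the upper bound at most a constant times $q^{nd}/n$. Thus the exponent of $q$ \emph{does} scale like $dn$, exactly matching the lower bound $\qbinom{n}{d}{q} \ge q^{d(n-d)}$; the contradiction comes from comparing the secondary factors $1/n$ and $q^{-d^2}$, not from different linear-in-$d$ growth rates. Your search for a charging scheme with $d$-independent exponent, and your proposed bound $\binom{\ell}{bd}(ck)^{bd}$ based on how a single transversal meets a flat, are aimed at the wrong target.

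Consequently $d$ should not be a fixed constant. One needs $q^{d^2} = o(n)$ for the upper bound to fall strictly below the lower bound, and one needs $d \to \infty$ so that, with high probability, a $1-o(1)$ fraction of rank-$d$ flats of $\PG{n-1}{q}$ are \emph{dense} (contain at least $\tfrac{1}{2}p\frac{q^d-1}{q-1}$ points of $E_p$, hence have colouring number exceeding $b$); the paper takes $d = \lceil\log\log n\rceil$. With your fixed $d$, the claim that ``essentially all'' rank-$d$ flats retain rank $d$ in $E_p$ is false: each flat has only a bounded expected number of points in $E_p$ and drops rank with fixed positive probability, so even the lower-bound side of your outline would require a genuine second-moment computation rather than the simple Markov argument the paper uses. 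Once the correct encoding and the slowly growing $d$ are in place, the probabilistic ingredients you sketch (conditioning on Lemmas~\ref{lemma:PGP-size}--\ref{lemma:PGP-colourable}, and Chernoff plus Markov for the density of flats) are exactly what the paper does.
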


\begin{proof}
	Let $d = \lceil\log\log n\rceil$ and let $n_0$ be so large that
	\begin{equation*}
		d \ge 3,
		\qquad
		nq^{-d^2} > \frac{c^2(1+\delta)^2p^2}{(q-1)^2},
		\qquad\text{and}\qquad
		\frac{1}{2}p\frac{q^d-1}{(q-1)d} > b
	\end{equation*}
	for all $n \ge n_0$. We may assume that $n \ge n_0$.
	Let $k = (1+\delta)p\frac{q^n}{(q-1)n}$.
	For convenience, write $M = \PGP{n-1}{q}{p}$.
	
	We say that a rank-$d$ flat of $M$ is dense if it contains at least $\frac{1}{2} p \frac{q^d-1}{q-1}$ elements. Let $\mathcal{Z}_d$ be the set of dense rank-$d$ flats of $M$.
	
	\begin{claim}\label{claim:dense-flat}
		$\col{M|F} > b$ for all $F \in \mathcal{Z}_d$.
	\end{claim}
	\begin{proof}[Proof of claim]
		By Edmonds' characterisation and density,
		\begin{equation*}
			\col{M|F}
				\ge \frac{|F|}{d}
				\ge \frac{1}{2} p \frac{q^d-1}{(q-1)d}
				> b. \qedhere
		\end{equation*}
	\end{proof}
	
	Consider the following three properties:
	\begin{enumerate}[(i)]
		\item $M$ has at least $\frac{1}{2}p\frac{q^n-1}{q-1}$ elements.
		\item $M$ is $k$-colourable.
		\item $|\mathcal{Z}_d| \ge \frac{1}{2}\qbinom{n}{d}{q}$.
	\end{enumerate}
	We will show that each of these properties holds with high probability, and that if the three properties hold then $M$ is not $(b,c)$-decomposable.
	
	Property~(i) holds with high probability by Lemma~\ref{lemma:PGP-size}. Property~(ii) holds with high probability by Lemma~\ref{lemma:PGP-colourable}.
	
	\begin{claim}
		Property~(iii) holds with high probability.
	\end{claim}
	
	\begin{proof}[Proof of claim]
		Let $F$ be a flat of $\PG{n-1}{q}$. For $F$ to survive as a dense rank-$d$ flat of $M$, $|E_p \cap F|$ must be large while $r(E_p \cap F) = d$. The probability that $|F\cap E_p| < \frac{1}{2}p\frac{q^d-1}{q-1}$ is at most
		\begin{equation*}
			\exp\left(-\frac{1}{8}p\frac{q^d-1}{q-1}\right) = o(1)
		\end{equation*}
		by an application of the Chernoff bound, while the probability that $r(E_p \cap F) < d$ is at most
		\begin{equation*}
			\qbinom{d}{d-1}{q} (1-p)^{q^{d-1}} \le q^d(1-p)^{q^{d-1}} = o(1).
		\end{equation*}
		It follows that the expected number of flats of $F$ that do not survive as a dense rank-$d$ flat in $M$ is $o\left(\qbinom{n}{d}{q}\right)$. By the Markov inequality, the probability that more than $\frac{1}{2}\qbinom{n}{d}{q}$ rank-$d$ flats of $\PG{n-1}{q}$ do not survive as dense rank-$d$ flats in $M$ is at most
		\begin{equation*}
			o\left(\qbinom{n}{d}{q}\right)\!\Bigg/\frac{1}{2}\qbinom{n}{d}{q} = o(1),
		\end{equation*}
		and hence the probability that $|\mathcal{Z}_d| < \frac{1}{2}\qbinom{n}{d}{q}$ is~$o(1)$.
	\end{proof}
	
	Finally, we show that if (i)--(iii) hold, then $M$ is not $(b,c)$-decomposable --- the proof follows the argument used in~\cite{LeichterMoseleyPruhs2022}. Suppose that (i)--(iii) hold; for the sake of contradiction, assume that $M$ is $(b,c)$-decomposable, and let $\{E_1, E_2, \ldots, E_\ell\}$ be a $(b,c)$-decomposition.
	
	Let $F \in \mathcal{Z}_d$. By Claim~\ref{claim:dense-flat}, $M|F$ is not $b$-colourable. It follows that for every dense rank-$d$ flat of $M$ there is an index $i \in [\ell]$ such that $|F \cap E_i| \ge 2$.
	
	Every dense rank-$d$ flat of $M$ can therefore be specified by an element $i \in [\ell]$, a pair of elements in $E_i$, and $d-2$ elements outside of $E_i$ to complete a spanning subset of the flat. Thus, the number of dense rank-$d$ flats in $M$ is at most
	\begin{multline}\label{eq:contradiction1}
		\qquad|\mathcal{Z}_d| \le \ell\binom{ck}{2}\binom{q^n}{d-2}
			< n\frac{(ck)^2}{2}q^{n(d-2)} \\
			\le \frac{c^2(1+\delta)^2p^2}{2(q-1)^2n}q^{nd}
			\le \frac{1}{2}q^{nd-d^2} \le \frac{1}{2}\qbinom{n}{d}{q},\qquad
	\end{multline}
	where the penultimate inequality follows from $n \ge n_0$, and the final inequality follows from Lemma~\ref{lemma:qbinom}.
	
	Equation~\eqref{eq:contradiction1} contradicts Property~(iii), so $M$ is not $(b,c)$-decomposable.	
\end{proof}

\bibliographystyle{alpha}
\bibliography{bib}

\end{document}